\author{Pavel Paták}
\affil{Department of Mathematics and Statistics, Masaryk University, Brno, Czech Republic}
\newtheorem*{theorem*}{Theorem}
\newtheorem{theorem}{Theorem}
\newtheorem*{lemma*}{Lemma}
\newtheorem{lemma}[theorem]{Lemma}
\newtheorem*{proposition*}{Proposition}
\newtheorem{proposition}[theorem]{Proposition}
\newtheorem*{fact*}{Fact}
\newtheorem*{question*}{Question}
\newtheorem*{corollary*}{Corollary}
\newcounter{claimcounter}[theorem]
\numberwithin{claimcounter}{theorem}
\newtheorem*{claim*}{Claim}
\newtheorem{claim}[claimcounter]{Claim}
\theoremstyle{remark}
\newtheorem*{remark*}{Remark}
\theoremstyle{definition}
\newtheorem*{definition*}{Definition}
\numberwithin{equation}{section}
\newtheorem*{observation*}{Observation}
\newtheorem{conjecture}{Conjecture}
\newcommand{\R}{\mathbb{R}}
\DeclareMathOperator{\conv}{conv}
\DeclareMathOperator{\aff}{aff}
\DeclareMathOperator{\cl}{cl}
\DeclareMathOperator{\rk}{rk}
\title{Tverberg type theorems for matroids}
\author{Pavel Paták}
\begin{document}

\maketitle

\begin{abstract}
In this paper we show a variant of colorful Tverberg's theorem which is valid in any matroid:
Let $S$ be a sequence of non-loops in a matroid $M$ of finite rank $m$ with closure operator $\cl$. Suppose that $S$ is colored
in such a way that the first color does not appear more than $r$-times and each other color appears at most $(r-1)$-times.
Then $S$ can be partitioned into $r$ \emph{rainbow} subsequences $S_1,\ldots, S_r$ such that
$\cl \emptyset\subsetneq \cl S_1\subseteq \cl S_2\subseteq \ldots \subseteq \cl S_r$. In particular, $\emptyset\neq \bigcap_{i=1}^r \cl S_i$.
A subsequence is called rainbow if it contains each color at most once.

The conclusion of our theorem is weaker than the conclusion of the original Tverberg's theorem in $\R^d$, 
which states that $\bigcap \conv S_i\neq \emptyset$, whereas we only claim that $\bigcap \aff S_i\neq \emptyset$.
On the other hand, our theorem strengthens the Tverberg's theorem in several other ways:
\begin{enumerate*}[label={\roman*})]
\item it is applicable to any matroid (whereas Tverberg's theorem can only be used in $\R^d$), 
\item instead of $\bigcap \cl S_i\neq \emptyset$ we have 
the stronger condition $\cl \emptyset\subsetneq \cl S_1\subseteq \cl S_2\subseteq \ldots \subseteq \cl S_r$, and
\item we add color constraints that are even stronger than the color constraints in the colorful version of Tverberg's theorem.
\end{enumerate*}
  
Recently, we used the first property and applied the non-colorful version of this theorem to homology groups 
with $GF(p)$ coefficients to obtain several non-embeddability results, for details we refer to~\cite{ourKuhnel}.
\end{abstract}

\section{Introduction}

Tverberg's theorem~\cite{Tverberg} states that given $(d+1)(r-1)+1$ points\footnote{We allow repetitions among these points.} 
in $\R^d$, it is possible to split these points into $r$ sets
$S_1,\ldots, S_r$ with intersecting convex hulls, that is with 
$\bigcap \conv S_i\neq \emptyset$.

If one replaces convex hulls with affine hulls, one obtains a valid statement (Lemma~\ref{lem:affine_tverberg}),
which has the advantage of being applicable to any field~\cite{ourKuhnel,ourKuhnelArXiv}.
Lemma~\ref{lem:affine_tverberg} is also easier to prove than the original Tverberg's theorem. 
Since the proof only uses properties of closure operators, the statement does generalize
to any matroid (Lemma~\ref{lem:matroidal_tverberg}).
In both these cases the conclusion can be strengthened a bit: instead of $\cl S_1\cap\ldots\cl S_r\neq \emptyset$,
one can require $\cl \emptyset \subsetneq \cl S_1\subseteq \cl S_2\subseteq \ldots \subseteq  \cl S_r$.

In this paper we study the variant of Tverberg's theorem for matroidal closures
and show that it allows a colorful version -- a generalization where the original points
are colored and one furthermore requires that no resulting set $S_i$, $i=1,\ldots, r$
contains two or more points of the same color.

While the version without colors is straightforward~\cite[Lemma 12]{ourKuhnelArXiv} 
the proof of the colorful version
is more subtle. 
Moreover, our proof method yields an efficient algorithm that finds the required sets in polynomial time.

\subsection{Terminology}\label{subsec:terminology}
Before we state our results formally, let us introduce some notations and terminology
which will allow us to nicely present the statements and proofs.
We assume that the reader is acquainted with the basic matroid theory.
We always use the symbols $r$ and $m$ to denote non-negative integers.
We use the symbols $\cl$, $\aff$, $\conv$ and $\rk$ for matroidal closure, affine closure, convex hull
and rank function, respectively.

If $M$ is a set, we consider a sequence $S=(m_i)_{i\in I}$ of elements from $M$
as a set of pairs $\{(i,m_i)\mid i\in I\}$.
With this convention we can use the set theoretic terminology for sequences: 
$|S|$ is the length of the sequence, $S'\subseteq S$ means that $S'$ is a subsequence of $S$,
we know what it means for two subsequences to be disjoint,
we can use the operation $S\setminus S'$ of (sequence) difference, etc.

If $S=\{(i,m_i)\mid i\in I\}$ is a sequence and we need to refer to the set $\{m_i\mid i\in I\}$,
we use the symbol $S^{set}$.

If $\Psi$ is a map from the subsets of $M$ (for example a closure operator, rank function),
and $S=(m_i)_{i\in I}$ is a sequence in $M$, we use a shorthand $\Psi(S):=\Psi\left(S^{set}\right)$.
To make formulas and equations shorter, we leave out the parantheses after the operators $\cl$, $\aff$, $\conv$ and $\rk$
when there is no danger of confusion.

A coloring of a sequence $S=\{(i,m_i)\mid i\in I\}$ is any map $c\colon S\to C$ into some set $C$ of colors,
that is, $c$ assigns to each pair $(i,m_i)$ a color from $C$. The sequence $S$ is \emph{rainbow} with respect to $c$,
if the restriction of $c$ to $S$ is injective.

\subsection{Main results}

\bigskip
Let us first state the non-colorful variant of Tverberg's theorem for affine hulls
and its easy generalization to matroidal closures.

\begin{lemma}[Tverberg for affine hulls~\cite{ourKuhnel,ourKuhnelArXiv}]\label{lem:affine_tverberg}
 Let $S$ be a sequence of points in an affine space $\mathbb A$
 of dimension $d$. If\footnote{We do not require $d$ to be finite,
 therefore the slightly unusual formulation.} $|S|>(d+1)(r-1)$, 
 then there exist $r$ pairwise disjoint subsequences $S_1,\ldots, S_r$ of $S$ with $\bigcap_{i=1}^r \aff S_i\neq \emptyset$.
 In fact, there are $r$ pairwise disjoint subsequences $S_1,\ldots, S_r$ satisfying 
 $\emptyset\neq \aff S_1\subseteq \aff S_2\subseteq \ldots \subseteq \aff S_r$.
\end{lemma}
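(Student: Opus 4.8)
The plan is to establish the stronger chain statement by induction on $r$, at each step peeling off the subsequence with the \emph{largest} (spanning) affine hull. For $r=1$ the hypothesis $|S|>(d+1)\cdot 0=0$ just says $S\neq\emptyset$, so I would take $S_1=S$ (or any one-point subsequence of it): a one-term chain is trivially nested and its hull is nonempty. For $r\ge 2$ I would remove from $S$ an affinely independent spanning subsequence, call it $S_r$, and apply the induction hypothesis to the remaining points.

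The one auxiliary fact I need is that every sequence $S$ of points of $\mathbb A$ contains an affinely independent subsequence $B$ with $\aff B=\aff S$ and $|B|=\dim\aff S+1\le d+1$. I would prove this by the usual greedy argument: start from $B=\emptyset$ and, as long as some point of $S$ lies outside $\aff B$, add such a point to $B$; each step keeps $B$ affinely independent and increases $|B|$, so it terminates after at most $d+1$ steps, at which point $S\subseteq\aff B$ and hence $\aff S=\aff B$. (This is precisely the fact that affine independence defines a matroid, which is what lets the identical argument run in the general matroid setting of Lemma~\ref{lem:matroidal_tverberg}.)

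For the inductive step, assume the lemma for $r-1$ (over all affine spaces), pick $B\subseteq S$ as above, and set $S_r:=B$ and $S':=S\setminus B$. Then $\aff S_r=\aff S$, and writing $d':=\dim\aff S'$ we have $d'\le\dim\aff S\le d$ together with
\[
|S'|=|S|-|B|\ge|S|-(d+1)>(d+1)(r-1)-(d+1)=(d+1)(r-2)\ge(d'+1)(r-2).
\]
So the induction hypothesis, applied to $S'$ inside the affine space $\aff S'$ (the affine hull of a subsequence of $S'$ is the same whether computed in $\aff S'$ or in $\mathbb A$), yields pairwise disjoint subsequences $S_1,\dots,S_{r-1}$ of $S'$ with $\emptyset\neq\aff S_1\subseteq\cdots\subseteq\aff S_{r-1}$. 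Since $S_1,\dots,S_{r-1}\subseteq S'=S\setminus S_r$ the $r$ subsequences are pairwise disjoint, and since $S_{r-1}\subseteq S$ we get $\aff S_{r-1}\subseteq\aff S=\aff S_r$. Hence $\emptyset\neq\aff S_1\subseteq\cdots\subseteq\aff S_{r-1}\subseteq\aff S_r$, and in particular $\bigcap_{i=1}^r\aff S_i\supseteq\aff S_1\neq\emptyset$.

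I expect the only real obstacle to be choosing the right direction for the recursion: one must peel off the part with the maximal, spanning hull first, rather than trying to build the part with the smallest hull and enlarge outward, since nothing constrains how the leftover points sit relative to a prescribed low-dimensional subspace. A pleasant consequence of this ordering is that the usual sticking point — guaranteeing that the bottom set $S_1$ is nonempty — comes for free from the base case, so the rest of the argument is just the dimension bookkeeping above, which works because the leftover affine hull can never have larger dimension than the ambient one.
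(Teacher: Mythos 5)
Your proof is correct and is essentially the proof the paper has in mind: the paper gives no self-contained argument for Lemma~\ref{lem:affine_tverberg}, deferring to~\cite{ourKuhnelArXiv} and to the remark that the argument uses only closure-operator properties, and that argument is exactly your induction on $r$ that peels off a spanning affinely independent subsequence $S_r$ and recurses on the rest. The only place you are slightly loose is the infinite-dimensional case that the footnote explicitly admits: there the greedy construction of $B$ needs transfinite recursion (equivalently, basis extension in the finitary affine matroid), and the cardinal subtraction $|S'|=|S|-|B|$ should be replaced by the observation that $|B|\le d<|S|$ forces $|S\setminus B|=|S|>(d'+1)(r-2)$.
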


\begin{lemma}[Matroidal Tverberg]\label{lem:matroidal_tverberg}
 Let $M$ be a (finitary\footnote{Finitary matroids are generalization
 of matroids to not necessary finite ground sets. They add the following axiom to the usual axioms for finite matroids:
 If $y\in\cl(X)$, then there exists a finite set $X'\subseteq X$ such that $y\in \cl(X')$. With these addition,
 such terms as rank or basis can be correctly defined.}) matroid of rank $m$ with closure operator $\cl$ and $S$ be a sequence of points in $M$ 
 with $|S|>m(r-1)$.
 Then there exist $r$ pairwise disjoint subsequences $S_1,\ldots, S_r$ of $S$
 satisfying $\cl\emptyset\subsetneq \cl S_1\subseteq \cl S_2\subseteq \ldots \subseteq \cl S_r$.
\end{lemma}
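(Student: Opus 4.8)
The plan is to prove the lemma by induction on $r$, peeling off one subsequence at a time. The only subtlety is \emph{which} subsequence to peel off first: rather than building the chain from the bottom (i.e.\ starting by fixing the smallest closure $\cl S_1$), I would build it from the top, extracting as the \emph{last} block $S_r$ a maximal independent subsequence of $S$. With this choice the containment $\cl S_{r-1}\subseteq\cl S_r$ — which would otherwise be the awkward condition to maintain, since the blocks are disjoint and so $S_{r-1}\not\subseteq S_r$ — comes for free, because $\cl S_r=\cl S\supseteq\cl S_{r-1}$.

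Concretely: for $r=1$ take $S_1:=S$; since the elements of $S$ are non-loops (this is implicit in, indeed necessary for, the statement — a sequence consisting of loops would have $\cl S=\cl\emptyset$) and $|S|\ge 1$, we get $\cl\emptyset\subsetneq\cl S_1$. For $r\ge 2$, let $S_r$ be a maximal independent subsequence of $S$, which exists by Zorn's lemma since independence has finite character in a finitary matroid; then $|S_r|=\rk S_r\le m$, and by maximality together with the closure axioms one has $S\subseteq\cl S_r$, hence $\cl S_r=\cl S$. Consequently $|S\setminus S_r|\ge|S|-m>m(r-1)-m=m(r-2)=m\bigl((r-1)-1\bigr)$, so the induction hypothesis applied to $S\setminus S_r$ (inside the same matroid $M$, whose relevant elements are again non-loops) yields pairwise disjoint subsequences $S_1,\dots,S_{r-1}$ of $S\setminus S_r$ with $\cl\emptyset\subsetneq\cl S_1\subseteq\dots\subseteq\cl S_{r-1}$. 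These are disjoint from $S_r$ by construction, and since $S_{r-1}\subseteq S$ we get $\cl S_{r-1}\subseteq\cl S=\cl S_r$, so the full chain $\cl\emptyset\subsetneq\cl S_1\subseteq\dots\subseteq\cl S_r$ holds.

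I do not anticipate a genuine obstacle here: the one place that requires a moment's thought is the decision to extract a spanning (= maximal independent) block first, and once that is made everything is forced — in particular the hypothesis $|S|>m(r-1)$ is exactly calibrated so that discarding at most $m$ elements still leaves enough for the recursion. The real significance of the lemma is that the \emph{same} scheme — peel off a well-chosen block, recurse on the remainder — is the natural starting point for the colorful strengthening that is this paper's main theorem, where the peeled block $S_r$ must in addition be chosen rainbow; arranging that, while keeping the counting tight, is where the actual work will lie.
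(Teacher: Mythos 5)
Your proof is correct. Note that the paper never writes out a proof of Lemma~\ref{lem:matroidal_tverberg}: it defers to the cited reference and remarks that the finite-rank case also follows from Theorem~\ref{thm:colorful_matroidal_tverberg}. The argument you give --- peel off a maximal independent (hence spanning) subsequence $S_r$ with $|S_r|\le m$, observe $\cl S_r=\cl S$, and recurse on the more than $m(r-2)$ remaining elements --- is exactly the ``straightforward'' argument the paper alludes to, and it is the color-free skeleton of the paper's proof of Theorem~\ref{thm:colored-tverberg-special}: there the induction also starts from a maximal independent \emph{rainbow} subsequence $RI_r$, and the entire difficulty of that proof lives in the case $\cl(RI_r)\subsetneq M$, which simply cannot occur once the rainbow constraint is dropped, since a maximal independent subsequence of $S$ always spans $\cl S$. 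Your side remark is also accurate and worth recording: the non-loop hypothesis, which appears explicitly in Theorems~\ref{thm:colorful_matroidal_tverberg} and~\ref{thm:colored-tverberg-special} but not in the literal statement of Lemma~\ref{lem:matroidal_tverberg}, is genuinely needed (a sequence consisting of loops satisfies $|S|>m(r-1)$ yet every subsequence has closure $\cl\emptyset$), so your insertion of it is a correction of the statement rather than an extra assumption.
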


In~\cite[Lemma~13]{ourKuhnelArXiv} we only stated that there exists sets $S_i$ with $\emptyset\neq \bigcap \aff S_i$.
However, the proof there implies Lemma~\ref{lem:affine_tverberg}, and (if one replaces $\aff$ with the closure
operator $\cl$ of a matroid) Lemma~\ref{lem:matroidal_tverberg}.
In the case of matroids of finite rank, both lemmas can also be obtain as a direct consequence of Theorem~\ref{thm:colorful_matroidal_tverberg}.

In~\cite{ourKuhnel,ourKuhnelArXiv} we applied Lemma~\ref{lem:affine_tverberg} to homology groups over finite fields. This enabled us to prove some inequalities for simplicial complexes embeddable into various manifolds.
Our colorful matrodial Tverberg (Theorem~\ref{thm:colorful_matroidal_tverberg}) provides a control
of the resulting sets, which enables us to further improve the bounds from~\cite{ourKuhnel,ourKuhnelArXiv}. For the details
of the improvement, see the author's thesis~\cite{patak:thesis}.

We are now ready to state the main results of this paper.

\begin{theorem}\label{thm:colorful_matroidal_tverberg}
 Let $M$ be a matroid of a finite rank $m$ and $S$ be a sequence of non-loops in $M$ 
 colored by some colors in such a way that at most $m$ elements of $S$ are colored by the first color,
 at most $m-1$ by the second color, at most $m-1$ by the third color, etc.
 If $|S|>m(r-1)$, 
 then there exist $r$ pairwise disjoint rainbow subsequences $S_1,\ldots, S_r$ of $S$,
 such that $\cl\emptyset \subsetneq \cl S_1\subseteq \cl S_2\subseteq \ldots \subseteq \cl S_r$.
 
 Furthermore, if the time required to decide whether a point $x\in M$ lies in the closure of a set $Y\subseteq M$ is bounded by $u$,
 then the subsequences $S_1,\ldots, S_r$ can be found in time polynomial in $r$, $m$, $u$ and $|S|$.
\end{theorem}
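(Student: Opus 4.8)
The plan is to build the chain $\cl S_1\subseteq\cdots\subseteq\cl S_r$ greedily from the bottom, peeling off one rainbow set at a time, but with an exchange/augmentation argument to repair color conflicts. First I would reformulate the goal: it suffices to find pairwise disjoint rainbow subsequences $S_1,\ldots,S_r$ with $\cl S_1\subseteq\cdots\subseteq\cl S_r$ and $\cl S_1\neq\cl\emptyset$; the nestedness of closures together with $S_1$ containing a non-loop then gives $\cl\emptyset\subsetneq\bigcap_i\cl S_i$. The natural strategy is induction on $r$. For the inductive step one wants to extract a single rainbow set $S_r$ (the \emph{last}, hence largest, link of the chain) so that (a) the remaining sequence $S\setminus S_r$ still satisfies the color bounds for $r-1$ sets, (b) $|S\setminus S_r|>m(r-2)$, and (c) $\cl T\subseteq\cl S_r$ for every rainbow $T\subseteq S\setminus S_r$ that the induction will produce — the cleanest way to force (c) is to arrange $\cl S_r=\cl S$, i.e. to pull out a rainbow \emph{spanning} subsequence.

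So the heart of the matter is: given a sequence $S$ of non-loops with the stated color bounds and $|S|>m(r-1)$, find a rainbow subsequence $S_r\subseteq S$ with $\cl S_r=\cl S$ such that $S\setminus S_r$ still obeys "first color $\le m$, every other color $\le m-1$" now with parameter $r-1$, i.e. removing $S_r$ should drop the count of the first color by at least... here one has to be careful, since $S_r$ rainbow removes at most one element of each color. The point is that the first color may appear up to $m$ times, and after removing $S_r$ we need it to appear at most $m$ times with the \emph{new} first color possibly being a different color; the bookkeeping is that among the $\le m$ elements of color $1$ and the $\le m-1$ of each further color, a rainbow set uses one of each, and I would show by a counting/pigeonhole argument (using $|S|>m(r-1)$, so there are at least $mr-m+1$ elements spread over colors each of multiplicity $\le m$) that one can always choose the rainbow spanning set $S_r$ to include a full "representative transversal" forcing the residual multiplicities down to the pattern required for $r-1$. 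Concretely: a rainbow spanning set exists because the non-loops span $\cl S$ and a matroid basis of $\cl S$ drawn from $S$ has size $\le m$, but a basis need not be rainbow, so I would instead take a \emph{maximal} rainbow independent subsequence and argue it spans (any element outside it either repeats a color already present — then it is in the closure by a replacement argument since the matroid restricted to one representative per color still has rank $\rk S$ — or it would extend it, contradicting maximality); this is exactly where matroid exchange is used and is the step I expect to be the main obstacle, because "maximal rainbow independent" need not be "spanning" in a general matroid unless the color classes are set up correctly, so the argument must simultaneously track ranks and color multiplicities.

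For the algorithmic claim, I would make each of the above steps constructive: a maximal rainbow independent subsequence is found by a single greedy pass adding elements one at a time and testing $x\in\cl(\text{current set})$ in time $u$; choosing which representative of each color to keep so as to hit the residual color-bound pattern is a matching-type selection that can be done greedily in $\mathrm{poly}(|S|)$ closure queries; and the outer recursion runs $r$ times. Summing, the total is polynomial in $r$, $m$, $u$, and $|S|$. The only subtlety worth flagging in the write-up is verifying that the residual sequence $S\setminus S_r$ indeed has length $>m(r-2)$: since $|S_r|\le m$ (it is independent) and $|S|>m(r-1)$, we get $|S\setminus S_r|>m(r-1)-m=m(r-2)$, so the inductive hypothesis applies, and the base case $r=1$ is immediate ($S_1:=$ any single non-loop, or all of $S$, gives $\cl\emptyset\subsetneq\cl S_1$).
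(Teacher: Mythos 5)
Your outer induction (peel off the top link $S_r$ of the chain, recurse on $S\setminus S_r$, and note that $|S\setminus S_r|>m(r-2)$ and that the colour bounds are preserved) matches the easy case of the paper's argument. But the step you yourself flag as ``the main obstacle'' is not a patchable technicality: the claim that a maximal rainbow independent subsequence spans $\cl S$ --- or even that \emph{some} rainbow subsequence $S_r$ with $\cl S_r=\cl S$ exists --- is false, and overcoming its failure is essentially the entire content of the paper's proof. Here is a concrete counterexample (it is the paper's Figure~\ref{fig:unlucky}): in the affine matroid of $\R^2$ (rank $m=3$) take $r=3$ and seven points $a_1,\dots,a_7$ with $a_1=a_4=a_5=a_6=a_7=P$ a single point and $a_2,a_3$ chosen so that $a_2,a_3,P$ are affinely independent; colour $a_2,a_3,a_4$ blue, $a_1,a_5$ orange, $a_6,a_7$ red. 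All hypotheses of the theorem hold ($|S|=7>6$, three blue, two of each other colour) and $\cl S=\R^2$, but every rainbow subsequence is contained in $\{b,P\}$ for a single blue point $b$, hence has rank at most $2$. So your condition (c) cannot be arranged, and the recursion genuinely breaks: taking $S_3=\{a_2,a_1\}$ (a maximal rainbow independent set) the recursive call on $S\setminus S_3$ may legitimately return $S_2=\{a_3,a_6\}$, whose closure (the line $a_3P$) is not contained in $\cl S_3$ (the line $a_2P$). The correct output here is instead three singletons equal to $P$, which no ``grow a spanning rainbow set'' strategy will find.

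The paper's proof is devoted precisely to this degenerate situation. It first normalises the colour multiplicities by reducing Theorem~\ref{thm:colorful_matroidal_tverberg} to Theorem~\ref{thm:colored-tverberg-special} (adjoining $r-1$ copies of a coloop for each missing colour), and then runs an augmenting-path-style loop on a maximal rainbow independent set $RI_r$, maintaining ``replacement rules'' $(K_k,I_k,I_k^p)$: either a chain of colour-respecting exchanges enlarges $RI_r$ (so one inducts on $m-\rk RI_r$), or the loop certifies a proper flat $\cl I_k$ containing all points whose colours lie in $K_k$; in that case two colour classes are merged and one recurses on the smaller-rank submatroid (this is what handles the example above, producing the three copies of $P$). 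Your proposal contains no mechanism for the case where no rainbow subsequence spans, so the induction does not go through as written; the counting/pigeonhole and greedy-transversal arguments you sketch address only the (comparatively routine) multiplicity bookkeeping, not this structural obstruction.
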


In the proof of Theorem~\ref{thm:colorful_matroidal_tverberg} we encounter another version
of colorful matroidal Tverberg's theorem.

\begin{theorem}\label{thm:colored-tverberg-special}
 Let $M$ be a matroid of a finite rank $m$ and $S$ a sequence of non-loops in $M$
 colored by $m$ colors in such a way that at least $r$ elements of $S$ are colored by the first color,
 at least $r-1$ by the second color, at least $r-1$ by the third, \dots, at least $r-1$ by the $m$th color. 
 
 Then there exist $r$ pairwise disjoint rainbow subsequences $S_1,\ldots, S_r$ of $S$
 such that $\cl\emptyset \subsetneq \cl S_1\subseteq \cl S_2\subseteq \ldots \subseteq \cl S_r$.
 
 Furthermore, if the time required to decide whether a point $x\in M$ lies in the closure of a set $Y\subseteq M$ is bounded by $u$,
 then the subsequences $S_1,\ldots, S_r$ can by found in time polynomial in $r$, $m$, $u$ and $|S|$.
\end{theorem}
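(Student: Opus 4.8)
I would prove the theorem by induction on the pair $(\rk M, r)$, ordered lexicographically. A direct greedy argument (repeatedly dropping the next point into the highest-indexed part whose closure does not yet contain it), which settles the uncoloured Lemma~\ref{lem:matroidal_tverberg}, does not obviously respect the rainbow constraint, so I would argue recursively instead. To make the recursion close up it is convenient first to isolate a more flexible version of the statement in which the colour classes may have arbitrary, unequal sizes and in which the number of colours need not equal $\rk M$; its hypothesis is a Rado/matroid-union type inequality that is implied by the hypothesis of Theorem~\ref{thm:colored-tverberg-special}, and I would also discard superfluous elements so that the multiplicities are exactly $r,r-1,\dots,r-1$. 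Two base cases are handled at once. If $r=1$, take $S_1$ to be a single element of the first colour; it is a non-loop, so $\cl \emptyset\subsetneq\cl S_1$. If $\rk M=1$, every non-loop has the same closure $F$, the unique flat of rank $1$, and the first colour already provides $r$ of them, so $r$ distinct singletons do the job.

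So assume $r\ge 2$ and $\rk M\ge 2$. Let $P$ be the partition matroid whose parallel classes are the colour classes, so that a subsequence is rainbow exactly when it is independent in $P$; running matroid intersection on $M$ and $P$ decides whether $M$ has a \emph{rainbow basis}, i.e.\ a transversal of the colour classes forming a basis of $M$. \textbf{Case 1: such a basis $B$ exists.} Set $S_r:=B$. Because $B$ meets each colour class at most once, $S\setminus B$ still satisfies the (generalised) hypothesis with parameter $r-1$, so the inductive hypothesis produces pairwise disjoint rainbow subsequences $S_1,\dots,S_{r-1}$ of $S\setminus B$ with $\cl \emptyset\subsetneq\cl S_1\subseteq\cdots\subseteq\cl S_{r-1}$, and $\cl S_{r-1}\subseteq M=\cl S_r$ is automatic. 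This branch decreases $r$.

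\textbf{Case 2: $M$ has no rainbow basis.} By Rado's theorem --- equivalently, by the deficiency set that the matroid-intersection algorithm returns when it fails --- there is a set $J$ of colours with $f:=\rk\bigl(\bigcup_{j\in J}C_j\bigr)<|J|$. Put $F:=\cl\bigl(\bigcup_{j\in J}C_j\bigr)$; it is a flat of rank $f<\rk M$ that contains at least $f+1$ colour classes in their entirety. I would then solve the \emph{entire} problem inside the restriction $M|_F$, which has rank $f$. If one of the colours trapped in $F$ has multiplicity $\ge r$ (for instance the first colour, when it lies in $J$), feed $f$ of the trapped colours into the induction at rank $f$; otherwise --- this can only happen when $r\ge 2$ --- first merge two trapped colour classes into one, which then has multiplicity $\ge 2(r-1)\ge r$, and use $f$ of the resulting classes. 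Either way the induction at rank $f$ yields pairwise disjoint subsequences $S_1,\dots,S_r$ that are rainbow for the merged colouring, hence also for the original one, with $\cl \emptyset\subsetneq\cl_{M|_F}S_1\subseteq\cdots\subseteq\cl_{M|_F}S_r$. Since $F$ is a flat of $M$ we have $\cl_{M|_F}S_i=\cl_M S_i\subseteq F$, so these sequences already prove the theorem; no closure ever needs to leave $F$. This branch decreases $\rk M$, so $(\rk M,r)$ strictly drops in every non-base step and the recursion halts. The efficiency claim then follows: the only nontrivial operations are the matroid-intersection calls, which are polynomial in $|S|$, $\rk M$ and the oracle cost $u$ (a closure oracle yields rank and independence oracles), the merges and restrictions are cheap, and the recursion has depth $O(\rk M+r)$.

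The main obstacle I anticipate is the bookkeeping around the generalised statement in Case 2: one has to pin down a hypothesis flexible enough --- unequal colour multiplicities, the number of colours untethered from $\rk M$, closures possibly failing to span all of $M$ --- that both the peeling step and the descent into $F$ preserve it, and then verify that a tight set always leaves at least $f+1$ colour classes inside $F$ and that the $f$ chosen classes, after the possible merge, satisfy that hypothesis at rank $f$. The numerical fact that makes everything go through is just $|J|\ge f+1$, the defining property of a tight set, so the real work is in formulating the induction correctly rather than in any isolated hard estimate.
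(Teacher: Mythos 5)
Your proposal is correct, and it reaches the same structural dichotomy as the paper --- peel off a spanning rainbow independent set when one exists and recurse on $r-1$; otherwise locate a low-rank flat that swallows more colour classes than its rank, merge colours, and recurse on the rank --- but it gets there by a genuinely different route. The paper never invokes Rado's theorem or matroid intersection; instead its entire ``cycle'' with the replacement data $(K_k, I_k, I_k^p)$ is a from-scratch augmenting-path argument that simultaneously either enlarges the maximal independent rainbow subsequence $RI_r$ (your Case~1, reached when augmentation succeeds), or certifies deficiency by exhibiting the flat $\cl(I_k)$ whose colour set $K_k$ strictly exceeds its rank (your Case~2), and it tracks the explicit exchange sequences needed to realize the augmentation. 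You outsource exactly this core to Rado's theorem (equivalently, the failure certificate of matroid intersection on $M$ and the partition matroid of colour classes): the deficient set $J$ with $f=\rk\bigl(\bigcup_{j\in J}C_j\bigr)<|J|$ plays the role of the paper's terminal pair $(K_k, I_k)$. Your accounting in Case~2 is sound ($|J|\ge f+1$ trapped classes, $f\ge 1$ since all elements are non-loops, merging two classes gives multiplicity $2(r-1)\ge r$, and $\cl_{M|_F}S_i=\cl_M S_i$ because $F$ is a flat), and in fact the ``more flexible generalised statement'' you worry about is unnecessary: both recursive calls satisfy the theorem's hypothesis verbatim, with exactly $f$ colours at rank $f$ in Case~2 and multiplicities $r-1, r-2,\dots$ in Case~1. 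What your approach buys is brevity and modularity, at the price of importing Rado/matroid intersection (including extracting the deficient set algorithmically from the closure oracle); what the paper's approach buys is a self-contained argument whose explicit replacement rules also make the algorithmic claim immediate and which, as the paper notes in its final section, exposes the combinatorial difficulties relevant to Rota's basis conjecture. One cosmetic difference: in the descent step the paper merges a single extra point $p$ into one trapped class to reach multiplicity $r$, whereas you merge two whole classes; both work.
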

Note the different conditions on the number of points of each color.
In Theorem~\ref{thm:colorful_matroidal_tverberg} these conditions are used to ensure
that we have enough colors. 
In Theorem~\ref{thm:colored-tverberg-special} we already have the right number of colors,
but the conditions ensure that the length of $S$ is sufficient.

\medskip
Moreover, these results are tight:
\begin{proposition}\label{prop:tight}
 Lemma~\ref{lem:affine_tverberg}, Lemma~\ref{lem:matroidal_tverberg}, Theorem~\ref{thm:colorful_matroidal_tverberg}
 and Theorem~\ref{thm:colored-tverberg-special} are sharp. To be precise, for any $r$ and any matroid $M$ of rank $m$ there
 exists a sequence $S$ of non-loops in $M$ with $|S|=m(r-1)$ such that any division of $S$ into  $r$ disjoint subsequences $S_1$,\dots, $S_r$ satisfies $\bigcap \cl S_i =\cl\emptyset$.
\end{proposition}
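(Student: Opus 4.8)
The formal assertion is purely matroidal, so the plan is to prove that and then read off the four sharpness claims: for Lemma~\ref{lem:matroidal_tverberg} it is literally the statement, for Lemma~\ref{lem:affine_tverberg} one applies it to the affine matroid of rank $d+1$ (so $m=d+1$), whose closure operator is $\aff$, and for the two colorful theorems one additionally equips the extremal sequence with a coloring meeting their hypotheses (last paragraph). For the construction, assume $m\ge 1$ (if $m=0$ take $S=\emptyset$), fix a basis $B=\{b_1,\dots,b_m\}$ of $M$ --- it is independent, hence consists of non-loops --- and let $S$ be the sequence with exactly $r-1$ copies of each $b_j$, so $|S|=m(r-1)$. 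For arbitrary pairwise disjoint subsequences $S_1,\dots,S_r$ of $S$, set $A_i:=S_i^{set}\subseteq B$; the goal is $\bigcap_{i=1}^r\cl S_i=\bigcap_{i=1}^r\cl A_i=\cl\emptyset$.

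The first, purely combinatorial, step is a pigeonhole count: each $b_j$ has only $r-1$ copies in $S$, so they meet at most $r-1$ of the $r$ subsequences, hence some $A_i$ omits $b_j$; since this holds for every $j$ and every $A_i\subseteq\{b_1,\dots,b_m\}$, we get $\bigcap_{i=1}^r A_i=\emptyset$. The real work is the implication $\bigcap_i A_i=\emptyset\Rightarrow\bigcap_i\cl A_i=\cl\emptyset$, which fails for arbitrary subsets of a matroid but holds here because the $A_i$ all lie in one independent set. I would get it from the auxiliary identity $\cl A\cap\cl A'=\cl(A\cap A')$, valid whenever $A,A'\subseteq B$ with $B$ independent: ``$\supseteq$'' is trivial; for ``$\subseteq$'', take $x\in\cl A\cap\cl A'$ and split into $x\in B$ (independence of $B$ forces $x\in A\cap A'$) and $x\notin B$, where $\rk(A\cup\{x\})=|A|$, $\rk(A'\cup\{x\})=|A'|$ and $\rk(A\cup A'\cup\{x\})=|A\cup A'|$ (as $x\in\cl A\subseteq\cl(A\cup A')$), so submodularity of $\rk$ applied to $A\cup\{x\}$ and $A'\cup\{x\}$ --- whose intersection is $(A\cap A')\cup\{x\}$ --- yields $\rk((A\cap A')\cup\{x\})\le|A\cap A'|$, i.e. $x\in\cl(A\cap A')$. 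Iterating the identity along $A_1,\dots,A_r$ gives $\bigcap_i\cl A_i=\cl(\bigcap_i A_i)=\cl\emptyset$, finishing the matroid statement. (Alternatively one can argue with circuit elimination: a non-loop $x$ common to all $\cl A_i$ would force circuits through $x$ inside each $A_i\cup\{x\}$, these circuits would all coincide --- else eliminating $x$ gives a circuit inside the independent set $B$ --- and the common circuit minus $x$ would lie in $\bigcap_i A_i=\emptyset$, making $\{x\}$ a circuit, a contradiction.) I expect this implication to be the only genuine obstacle: the temptation is to assume ``intersection of spans equals span of intersection'', which is generally false and here needs the independence of $B$.

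For the colorful theorems the same $S$ works, because every division of it --- in particular every rainbow division --- already satisfies $\bigcap_i\cl S_i=\cl\emptyset$; one only needs a compatible coloring. For Theorem~\ref{thm:colored-tverberg-special} color the $r-1$ copies of $b_j$ by the $j$-th of $m$ colors: every color, the first included, then occurs exactly $r-1$ times, showing that ``at least $r$ copies of the first color'' cannot be relaxed to ``at least $r-1$''. For Theorem~\ref{thm:colorful_matroidal_tverberg} (the cases $m\le 1$ being trivial) color $m$ elements of $S$ by the first color and split the remaining $m(r-2)$ into blocks of size at most $m-1$, one new color per block: the color-multiplicity hypothesis holds while $|S|=m(r-1)$, so ``$|S|>m(r-1)$'' cannot be weakened to ``$|S|\ge m(r-1)$''.
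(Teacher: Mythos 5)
Your proposal is correct and follows the paper's proof in all structural respects: the same extremal sequence ($r-1$ copies of each element of a basis $B$), the same pigeonhole observation that $\bigcap_i S_i^{set}=\emptyset$, and the same key identity $\cl(U)\cap\cl(V)=\cl(U\cap V)$ for $U,V\subseteq B$ (the paper's Lemma~\ref{lem:intersection}), iterated over the $r$ parts. The one genuine difference is how you prove that identity: the paper argues via inclusion-minimal subsets $U'\subseteq U$, $V'\subseteq V$ spanning $x$ and the exchange axiom, forcing $U'=V'$, whereas you apply submodularity of $\rk$ to $A\cup\{x\}$ and $A'\cup\{x\}$ to get $\rk\bigl((A\cap A')\cup\{x\}\bigr)\le|A\cap A'|$ directly. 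Your route is shorter and avoids the paper's slightly delicate rank-comparison step (and your case split $x\in B$ versus $x\notin B$ is handled correctly); the paper's route uses only the closure/exchange axioms and so does not presuppose the rank function. Your closing paragraph supplying explicit colorings that meet the hypotheses of Theorems~\ref{thm:colorful_matroidal_tverberg} and~\ref{thm:colored-tverberg-special} is a worthwhile addition the paper omits, since sharpness of the colorful statements strictly requires exhibiting such a coloring and not just the uncolored sequence.
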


\textbf{Tverberg-type theorems in $\R^d$}\\
Let us now compare our main results with the related theorems valid in $\R^d$.

In this section $\Delta_n$ denotes the $n$-dimensional simplex.

Tverberg's theorem can be stated as follows: If $f\colon \Delta_{(d+1)(r-1)}\to \R^d$ is an affine map,
there are $r$ pairwise disjoint faces $\sigma_1,\ldots, \sigma_r$ of $\Delta_{(d+1)(r-1)}$
with $\bigcap_{i=1}^r f(\sigma_i)\neq \emptyset$. This is the reason why 
Tverberg's theorem is also called affine Tverberg's theorem. To avoid confusion, we have decided not to use the name
``affine Tverberg'' for Lemma~\ref{lem:affine_tverberg}.

If $r$ is a prime power, Özaydin~\cite{Ozaydin:Tverberg} showed that the same result holds for an arbitrary continuous map $f$.
The statement is known as topological Tverberg. It was a long-standing open problem, whether topological Tverberg can be extended to other values of $r$.
The negative answer came in 2015, when Frick (based on the previous work of Mabillard and Wagner~\cite{Mabillard})
constructed first counterexamples~\cite{Frick}. Counterexamples for other values of $d$ and $r$ followed shortly afterwards.~\cite{Uli:TverbergThree}

If $r$ is a prime, there is a colorful version of (topological) Tverberg's theorem~\cite{Blagojevic:Optimal_Tverberg} as well:
Suppose that the vertices of $K=\Delta_{(d+1)(r-1)}$ are colored in such a way, that no color is used
more than $(r-1)$-times. 
Then for every continuous map $f\colon \Delta_{(d+1)(r-1)}\to \R^d$, there are $r$ pairwise disjoint rainbow\footnote{Containing
each color at most once.}
faces $\sigma_1,\ldots, \sigma_r$ of $\Delta_{(d+1)(r-1)}$ with $\bigcap_{i=1}^r f(\sigma_i)\neq \emptyset$.

The colorful version provides more control over the resulting sets $\sigma_1,\ldots,\sigma_r$.

Even if $f$ is an affine map, the only known proof uses topological methods and needs the assumption that $r$ is prime.
Whether this assumption can be relaxed in the affine situation is an open question.
Moreover, the topological proof does not provide any way how to find the pairwise disjoint faces, it merely shows their existence.

We see that Theorem~\ref{thm:colorful_matroidal_tverberg} does not require $r$ to be a prime number, it relaxes the conditions on the colors from topological version a bit and provides an efficient algorithm for finding the desired sets.

We also note that Bárány, Kalai and Meshulam proved another, very different 
Tverberg Type Theorem for Matroids~\cite{anotherMatroidalTverberg},
they considered continuous maps from the matroidal complex and showed the following:
If $b(M)$ denotes the maximal number of disjoint bases in a matroid M of rank $d+1$,
then for any continuous map $f$ from the matroidal complex $M$ into $\R^d$ there
exists $t\geq \sqrt{b(M)}/4$ disjoint independent sets $\sigma_1,\ldots, \sigma_t\in M$ such that
$\bigcap_{i=1}^{t}f(\sigma_i)\neq\emptyset$.

\section{Tightness}
We postpone the technical proofs of our main results, Theorems~\ref{thm:colorful_matroidal_tverberg} and \ref{thm:colored-tverberg-special} to the end of the paper. First we prove Proposition~\ref{prop:tight} showing their tightness. 
The proof is a variant of the standard construction for showing that Tverberg's theorem is tight.

We start with an auxiliary lemma.
 \begin{lemma}\label{lem:intersection}
 Let $M$ be a matroid with finite basis $B$. Then for any two sets $U,V\subseteq B$
            \begin{equation}
	      \cl(U)\cap \cl(V) =\cl(U\cap V).\label{eq:intersection}
            \end{equation}
 \end{lemma}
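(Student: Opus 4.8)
The plan is to exploit the fact that $U$ and $V$ are subsets of a common basis $B$, so both are independent sets, and to compute ranks via the restriction to $B$. The inclusion $\cl(U\cap V)\subseteq \cl(U)\cap\cl(V)$ is immediate from monotonicity of the closure operator, so the content is the reverse inclusion $\cl(U)\cap\cl(V)\subseteq\cl(U\cap V)$.

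First I would recall the submodularity (or, more precisely, the semimodular) inequality for the rank function, $\rk(X\cup Y)+\rk(X\cap Y)\le \rk(X)+\rk(Y)$, and apply it to $X=U$, $Y=V$. Since $U,V\subseteq B$ and $B$ is independent, every subset of $B$ is independent, hence $\rk(U)=|U|$, $\rk(V)=|V|$, $\rk(U\cap V)=|U\cap V|$ and $\rk(U\cup V)=|U\cup V|$. The inclusion–exclusion identity $|U\cup V|+|U\cap V|=|U|+|V|$ then forces equality in the semimodular inequality: $\rk(U\cup V)+\rk(U\cap V)=\rk(U)+\rk(V)$.

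Next I would use this equality to pin down the closure. Take any $x\in\cl(U)\cap\cl(V)$; I want $x\in\cl(U\cap V)$. Consider the flat $F=\cl(U\cap V)$ and note $\cl(U),\cl(V)\supseteq F$. The key step is to show $\cl(U)\cap\cl(V)=\cl(U\cap V)$ by a rank computation applied to the closures themselves: since adding $x$ to $U\cap V$ either keeps the rank or raises it by one, it suffices to show $\rk\big((U\cap V)\cup\{x\}\big)=\rk(U\cap V)$. Because $x\in\cl(U)$ we have $\rk(U\cup\{x\})=\rk(U)$, and similarly $\rk(V\cup\{x\})=\rk(V)$. Applying semimodularity to $U\cup\{x\}$ and $V\cup\{x\}$, whose intersection contains $(U\cap V)\cup\{x\}$ and whose union is $(U\cup V)\cup\{x\}$, together with the equality case established above, should squeeze $\rk\big((U\cap V)\cup\{x\}\big)$ down to $\rk(U\cap V)$, giving $x\in\cl(U\cap V)$.

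The main obstacle I anticipate is making the final squeezing argument fully rigorous: one has to be careful that the semimodular inequality applied to $U\cup\{x\}$ and $V\cup\{x\}$ combines correctly with the already-established equality $\rk(U\cup V)+\rk(U\cap V)=\rk(U)+\rk(V)$ and with the facts $\rk(U\cup\{x\})=\rk(U)$, $\rk(V\cup\{x\})=\rk(V)$, $\rk\big((U\cup V)\cup\{x\}\big)=\rk(U\cup V)$ (the last because $x\in\cl(U)\subseteq\cl(U\cup V)$) to yield $\rk\big((U\cap V)\cup\{x\}\big)\le\rk(U\cap V)$. Chaining these (in)equalities in the right order is the crux; everything else is bookkeeping with independent sets inside $B$.
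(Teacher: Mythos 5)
Your proposal is correct, and the chain of (in)equalities you worry about in the last paragraph does close up exactly as you hope: writing $X=U\cup\{x\}$ and $Y=V\cup\{x\}$, one has $X\cap Y=(U\cap V)\cup\{x\}$ and $X\cup Y=(U\cup V)\cup\{x\}$, so submodularity together with $\rk(U\cup\{x\})=\rk(U)$, $\rk(V\cup\{x\})=\rk(V)$ and $\rk\bigl((U\cup V)\cup\{x\}\bigr)=\rk(U\cup V)$ gives $\rk(U\cup V)+\rk\bigl((U\cap V)\cup\{x\}\bigr)\le\rk(U)+\rk(V)=\rk(U\cup V)+\rk(U\cap V)$, whence $\rk\bigl((U\cap V)\cup\{x\}\bigr)=\rk(U\cap V)$ and $x\in\cl(U\cap V)$ by the rank characterization of closure. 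All sets involved are finite since $B$ is, so there is no issue with applying submodularity. Your route is, however, genuinely different from the paper's. The paper avoids the rank function almost entirely: it takes inclusion-minimal $U'\subseteq U$ and $V'\subseteq V$ with $x\in\cl(U')\cap\cl(V')$, uses the exchange axiom to swap $x$ for elements of $U'$ and $V'$, and derives a contradiction with minimality unless $U'=V'$, which forces $x\in\cl(U\cap V)$. Your argument is shorter and isolates the real reason the lemma holds: subsets of a common independent set form a modular pair ($\rk(U)+\rk(V)=\rk(U\cup V)+\rk(U\cap V)$), and for modular pairs the closure of the intersection is the intersection of the closures. The paper's argument buys self-containedness from the closure/exchange axioms alone, at the cost of a more delicate case analysis; yours buys brevity and a reusable general principle, at the cost of invoking submodularity of the rank function.
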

 \begin{proof}
  Since the operator $\cl$ is monotone, the inclusion
  $\cl(U\cap V)\subseteq \cl(U)\cap \cl(V)$ is obvious.
  Let us now prove the opposite inclusion.
  
  Let $x\in \cl(U)\cap \cl(V)$ be an arbitrary element. We want to show that $x\in \cl(U\cap V)$.
  If $x$ is a loop, $x\in \cl \emptyset \subseteq \cl(U\cap V)$. So assume that $x$ is not a loop.
  
  Let $U'\subseteq U$ and $V'\subseteq V$ be inclusion minimal subsets with $x\in \cl(U')$ and $x\in \cl(V')$, respectively.
  Since we assume that $x$ is not a loop, $U'\neq \emptyset \neq V'$.
  
  We will show by contradiction that $U'=V'$, hence proving the claim.
  If $U'\neq V'$, we may up to symmetry assume that there is
  an element $u'\in U'$ which does not lie in $V'$.

  From the inclusion minimality of $U'$ follows that $x\in \cl\Bigr((U'\setminus \{u'\})\cup \{u'\}\Bigr)\setminus \cl(U'\setminus \{u'\})$. 
  The exchange principle yields $u'\in \cl\Bigl(U'\setminus \{u'\}\cup \{x\}\Bigr)$.
  Similarly $v'\in \cl\Bigl(V'\setminus \{v'\}\cup \{x\}\Bigr)$ for an arbitrary $v'\in V'$.
  
  The set $U'\cup V'$ is independent being a subset of a basis $B$.
  By construction $\cl\Bigl(U'\setminus\{u'\}\cup V'\setminus\{v'\}\cup \{x\}\Bigr)=\cl(U'\cup V')$.
  Comparing the ranks of both sides and using the fact that $u'\notin V'$, we see that $v'$ has to belong to $U'$. 
  Since $v'$ was arbitrary, this implies $V'\subsetneq U'$ -- in contradiction with $U'$ being minimal with $x\in \cl(U')$.
 \end{proof}

We can now finally prove Proposition~\ref{prop:tight}.
\begin{proof}[Proof of Proposition~\ref{prop:tight}]
 Let $B=(e_1,\ldots,e_m)$ be a basis of the matroid $M$.
 It suffices to take 
 \begin{equation}
  S=\underbrace{e_1,e_1,\ldots, e_1}_{(r-1)\times}, \underbrace{e_2,e_2,\ldots, e_2}_{(r-1)\times},\ldots, \underbrace{e_m,e_m,\ldots, e_m}_{(r-1)\times}.
 \end{equation}
 Let $S_1,\ldots, S_r$ be disjoint subsequences of $S$.
 Then
 \[
  \bigcap_{i=1}^r \cl(S_i) =  \cl\left(\bigcap_{i=1}^r S_i^{set}\right)=\cl(\emptyset),
 \]
 where the first equality follows by inductive application of Lemma~\ref{lem:intersection} and the second equality uses the fact that each element $e_j$ is missing in at least one sequence $S_i$.
\end{proof}

We also note that the assumption in Theorem~\ref{thm:colorful_matroidal_tverberg} that there are at most $r$ points of the first color is necessary.
Otherwise, one can consider the sequence $S=(1,2,3,4,\ldots, n, n+1)$ in $\R^1$ where the first $n$ elements are red
and the last element $n+1$ is blue. Then although the length of $S$ can be arbitrary,
there are no three disjoint rainbow subsequences $S_1$, $S_2$, $S_3$ with 
$\aff S_1\cap \aff S_2\cap \aff S_3\neq \emptyset$.
On the other hand, it is not true that this condition is necessary in every matroid. 
For example, consider the affine line over the field with two elements.

\bigskip
\section{The proof}
We begin the proof by showing that Theorem~\ref{thm:colored-tverberg-special} implies
Theorem~\ref{thm:colorful_matroidal_tverberg}. 

The reduction of Theorem~\ref{thm:colorful_matroidal_tverberg} to Theorem~\ref{thm:colored-tverberg-special} 
follows a well known pattern, a similar reduction previously appeared in the proof 
of the optimal colored Tverberg theorem~\cite{Blagojevic:Optimal_Tverberg} or 
in Sarkaria's proof for the prime power Tverberg theorem~\cite[2.7.3]{Sarkaria:Tverberg}, 
see also de Longueville's exposition~\cite[Prop.~2.5]{deLongueville:exposition}.
Nevertheless, there are subtle differences because we are working in greater generality
and because we need to take algorithmic aspects into consideration.

\begin{proof}[Theorem~\ref{thm:colored-tverberg-special} implies Theorem~\ref{thm:colorful_matroidal_tverberg}.]
Assume that the assumptions of Theorem~\ref{thm:colorful_matroidal_tverberg} are satisfied.
We show how to turn the sequence $S$ and the matroid $M$ with closure operator $\cl$ into a sequence $S'$ and matroid $M'$
with closure operator $\cl'$ that satisfy the assumptions of Theorem~\ref{thm:colored-tverberg-special}.
Moreover, we construct $S',M',\cl'$ and the coloring of $S'$ in a such way that 
the sets $S_1:=S_1'\cap S$, $S_2:=S_2'\cap S$, \dots, $S_r:=S_r'\cap S$
will satisfy $\cl\emptyset\subsetneq\cl(S_1)\subseteq \cl(S_2)\subseteq \ldots \subseteq \cl(S_r)$
iff and only if $\cl'\emptyset\subsetneq\cl'(S'_1)\subseteq \cl'(S'_2)\subseteq \ldots \subseteq \cl'(S'_r)$
and the rainbowness of $S_i'$ will imply that $S_i$ is rainbow. 

Let $m$ be the rank of $M$ and $d$ the number of colors used in $S$.
From the conditions follows that $d-m\geq 0$.

If the length of $S$ is strictly larger than $m(r-1)+1$,  we throw the superfluous elements of $S$ away. 
This does not add a point of any color, therefore all assumptions of Theorem~\ref{thm:colorful_matroidal_tverberg} remain preserved. 
So we may assume that the length of $S$ is precisely $m(r-1)+1$.

We form $M'$ from $M$ by adding $(d-m)$ new coloops\footnote{A coloop is an element $x$
that is independent on any set that does not contain $x$. In other words,
we form $M'$ as the direct sum of $M$ with the uniform matroid $U_{d-m}^{d-m}$.} $x_1,\ldots, x_{d-m}$. 
Now we form the sequence $S'$ by appending $(\underbrace{x_1,x_1, \ldots,x_1}_{(r-1)\times}, \underbrace{x_2,x_2, \ldots,x_2}_{(r-1)\times},\ldots \underbrace{x_{d-m}, \ldots, x_{d-m}}_{(r-1)})$ to $S$.

Clearly we can color the new elements of $S'$
so that in total there are exactly $r$ points of the first color, and exactly $r-1$ points
of every other color.

We see that $S'$, $M'$ satisfy the assumptions of Theorem~\ref{thm:colored-tverberg-special}.
It follows that there are $r$ rainbow subsequences $S'_1,\ldots, S'_r$ of $S'$
satisfying $\cl'\emptyset\subsetneq \cl' S'_1\subseteq \cl' S'_2\subseteq \ldots \subseteq S'_r$.

Since the points $x_i$ are coloops and since each one of them was added exactly $(r-1)$-times,
it follows that they cannot contribute to $\bigcap_{i=1}^r \cl' \left(S'_1\right)$.
Consequently, $\bigcap_{i=1}^r \cl \left(S\cap S'_i\right)\neq \cl\emptyset$
and $\cl\emptyset\subsetneq\cl(S\cap S'_1)\subseteq \cl(S\cap S'_2)\subseteq \ldots \subseteq \cl(S\cap S'_r)$.

We conclude that $S_1:=S\cap S_1', S_2:=S\cap S_2', \ldots, S_r:=S\cap S_r'$
are the required subsequences of $S$.

Observe that the reduction is polynomial in $r$, $m$, $u$ and $|S|$.
\end{proof}
\bigskip

Now we can start with the proof of Theorem~\ref{thm:colored-tverberg-special}.
Here we describe the main idea. We let $S_r$ be a rainbow independent subsequence of the maximal rank.
In an ideal case $\cl(S_r)=M$ and we may obtain the remaining 
subsequences $S_1,\ldots, S_{r-1}$ by apply induction on the sequence $S\setminus S_r$ inside $M$.

However, we may be unlucky. It may happen that no such $S_r$
satisfies $\cl(S_r)=M$, see Fig.~\ref{fig:unlucky}.
\begin{figure}[ht]
 \begin{center}
 \includegraphics{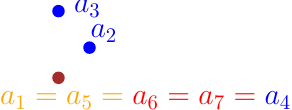}\\
 \medskip
 A situation in which no rainbow $S_r$ satisfies $\cl(S_r)=M$.
 $M=\R^2$, $\cl$ is the affine hull, $r=3$.
 Points $a_2,a_3,a_4$ use the first color (blue),
 $a_1,a_5$ use orange,
 $a_6$, $a_7$ use red.
 \caption{An example without $\cl(S_r)=M$.}\label{fig:unlucky}
 \end{center}
\end{figure}

We see that in this case
we could simply take the subsequence $S'=(a_1,a_4,a_5,a_6,a_7)$
and unify colors blue and red into one color (say violet).
Then $S'$ lives in a submatroid of rank $1$ and satisfies
the conditions of Theorem~\ref{thm:colored-tverberg-special}, so
we may use induction. We obtain subsequences $S_1,S_2,S_3$ of $S'$
satisfying $\cl\emptyset\subsetneq \cl(S_1)\subseteq \cl(S_2)\subseteq \cl(S_3)$.
These are clearly also subsequences of $S$.
Moreover they are not only rainbow in the violet-orange coloring,
but also in the original blue-orange-red coloring. 

In the proof we show that if $\cl(S_r)\neq M$,
we may always resolve the situation by an analogous trick. 

Let us now carry out the technical details. Since we promised an algorithmic solution,
we describe an algorithm that finds the desired subsequences.
\begin{proof}
 First we compute $m'=\rk S$. Since instead of $S$ we can consider
 the subsequence $S'$ formed by the elements
 colored by the first $m'$ colors (while preserving all assumptions of Theorem~\ref{thm:colored-tverberg-special}), 
 we may assume that $M=\cl(S)$.
 
 Now we find an inclusion maximal independent rainbow subsequence $RI_r$ of $S$.
 This can clearly be done in time polynomial in $r$, $u$, $m$ and $|S|$.
 
 We will proceed in the proof by induction on the triple $(r,m,m-\rk RI_r)$ (in lexicographical ordering).
 If $r=1$ or $m=1$ the statement is trivial, so assume $r>1, m>1$.
 
 If $m-\rk RI_r=0$, then $\cl(RI_r)=M$.
 Because $RI_r$ is rainbow, $S\setminus RI_r$ and $M$ satisfy the assumptions of Theorem~\ref{thm:colored-tverberg-special}
 for $r'=r-1$ . By applying induction we obtain $r-1$ disjoint rainbow subsequences $S_1,\ldots, S_{r-1}$ of $S\setminus RI_r$
 with $\cl\emptyset\subsetneq\cl(S_1)\subseteq \cl(S_2)\subseteq\ldots\subseteq \cl(S_{r-1})$.
 If we now set $S_r=RI_r$ we see that $S_1,\ldots, S_r$ are the desired disjoint rainbow subsequences
 with $\cl\emptyset\subsetneq\cl(S_1)\subseteq \cl(S_2)\subseteq\ldots\subseteq \cl(S_{r-1})\subseteq\cl(S_r)$.
 
 Therefore we may assume that 
 \begin{equation}
  \cl(RI_r)\subsetneq M\label{p:step_back}
 \end{equation}

 We would like to increase $RI_r$ by adding a point of a color
 that is not yet used in $RI_r$. Unfortunately, this is not possible without replacing some
 points of $RI_r$ first. Our algorithm uses a cycle to find out which points to replace and how.
 Within the cycle we need to keep track of ``replacement rules'' which makes this part a bit technical.
 Moreover, there are three possibilities what can occur at one iteration of the cycle:
 \begin{enumerate*}[label={\alph*})]
 \item either we construct a larger independent rainbow set $RI_r$, 
 \item we find the desired sets $S_1,\ldots, S_r$ in a smaller submatroid, or
 \item we adjust the replacement rules.
 \end{enumerate*}
 
 \paragraph{The cycle}
 In the $k$th step ($k=0,1,2,\ldots$) of the cycle the replacement rules consist of the following data:
 \begin{enumerate}
  \item set $K_k$ of colors (this set corresponds to colors that we may use while replacing some points),
  \item subsequence $I_k$ of $RI_r$ (eventually we would like to replace the subsequence $I_k$ of $RI_r$ by another sequence $I_k^p$),
  \item for each element $p$ whose color is in $K_k$ and which does not lie in $\cl(I_k)$ a subsequence $I_k^p$ of $S$
        (we want to replace $I_k$ with $I_k^p$, hence increasing the length of our subsequence by one)
 \end{enumerate}
 To simplify the terminology, if $T$ is a subsequence of $S$,
 let $c(T)$ denote the set of all the colors used by elements of $T$.
 If $U$ is a set of colors, let $C_U$ be the subsequence of $S$
 formed by all elements with color from $U$.
 
We want the data to satisfy the following conditions:
\begin{enumerate}[label=({\roman*})]
 \item $c(I_k)\subsetneq K_k$,\label{c:more-colors}
 \item $c(I_k^p)=c(I_k)\cup \{c_k^p\}$ for some  $c_k^p\in K_k\setminus c(RI_r)$,\label{c:rkp-colors}
 \item $|I_k^p|=|I_k|+1$,\label{c:rkp-augmenting-size}
 \item $p\in I_k^p$ and $\cl(I_k^p\setminus \{p\})=\cl(I_k)$\label{c:same_affine_hulls}
 \item $RI_r\cap C_{K_k}=I_k$ and $K_k\not\subseteq c(RI_r)$\label{c:no-further-colors}
\end{enumerate}
Note that conditions~\ref{c:rkp-colors} and \ref{c:rkp-augmenting-size} imply that $I_k^p$ only contains
elements that have the same colors as points in $I_k$ plus one additional point
that has color $c_k^p$, which is not yet present in $RI_r$.

The first step ($k=0$) is easy. We set $I_0:=\emptyset$ and let $K_0$ be all the colors of $S$
except for those already used in $RI_r$. 
No element $p\in C_{K_0}$ is contained in\footnote{$C_{K_0}$ are the elements of $S$ whose
color lies in $K_0$ and we assume that $S$ contains only non-loop elements.} $\cl(I_0)=\cl \emptyset$,
so we need to define the set $I_0^p$ for every such $p$. We simply put $I_0^p:=\{p\}$.

Now we check that the above defined sets satisfy all the prescribed conditions.
Note that by~\eqref{p:step_back}, $S\not\subseteq \cl(RI_r)$. 
This together with the fact that $RI_r$ is independent
implies that $|RI_r|<m$. Since we have $m$ colors, there is a color that is not used in $RI_r$. 
In other words, $K_0$ is nonempty.

Hence conditions~\ref{c:more-colors}--\ref{c:no-further-colors} are satisfied trivially 
(with $c_k^p=c(p)$ in condition~\ref{c:rkp-colors}).

So suppose that the sets $K_k$, $I_k$ and $I_k^p$ are already constructed. 
Since $I_k\subseteq RI_r$ there are three cases that may occur:
\begin{enumerate}[label={\alph*})]
 \item $C_{K_k}\subseteq \cl(I_k)$,\label{c:lower-dimensional}
 \item $C_{K_k}\not\subseteq \cl(RI_r)$ or\label{c:stop-backward-induction}
 \item $C_{K_k}\subseteq \cl(RI_r)$ and $C_{K_k}\not\subseteq \cl(I_k)$.\label{c:continue-backward-induction}
\end{enumerate}

We deal with the particular cases separately:
\subsubsection*{Case~\ref{c:lower-dimensional}: $C_{K_k}\subseteq \cl(I_k)$}
In this case, we may apply the trick we used for Fig.~\ref{fig:unlucky}.
Let us describe it formally.

We set $M':=\cl I_k$ and $m':=\rk(I_k)$. $M$ has rank $m$ and by \eqref{p:step_back} 
we know that $M\not\subseteq \cl(RI_r)$.
It follows that $\rk(RI_r)<m$ and since $I_k\subseteq RI_r$, we also have $m'=\rk(I_k)<m$. 

Condition~\ref{c:more-colors} implies $c(I_k)\subsetneq K_k$, so there is a point $p\in C_{K_k}\setminus C_{c(I_k)}$.

Because $I_k$ is rainbow and independent and $\rk I_k=m'$, $c(I_k)$ has $m'$ distinct elements,
say $k_1,\ldots, k_{m'}$. We define $S':=C_{\{k_1,\ldots, k_{m'}\}}\cup \{p\}$.
In $S'$ we recolor $p$ and all points of color $k_1$ by a new color $z$.

Because $S'\subseteq C_{K_k}$ (we evaluate $C_{K_k}$ with respect to the original coloring), 
the assumption $C_{K_k}\subseteq \cl(I_k)$ (Case~\ref{c:lower-dimensional}) implies
that $S'$ is a sequence of elements from $M'$. Also in $S'$ there are $m'$ colors, at least $r$ elements of color $z$
and at least $r-1$ elements of all the remaining colors. 
Therefore, the assumptions of Theorem~\ref{thm:colored-tverberg-special} are satisfied
for $m'<m$. By induction we obtain the desired disjoint rainbow subsequences $S_1,\ldots, S_r$ of $S'$ (which itself is a subsequence of $S$) 
with $\cl\emptyset\subsetneq \cl(S_1)\subseteq \cl(S_2)\subseteq \ldots \subseteq \cl(S_r)$.
These subsequences are rainbow with respect to the new coloring of $S'$.
By the construction of the new coloring these subsequences
are also rainbow in the original coloring of $S$.

\subsubsection*{Case~\ref{c:stop-backward-induction}: $C_{K_k}\not\subseteq \cl(RI_r)$}
In this case, we construct a new independent rainbow subsequence $RI_r'$ with $|RI_r'|=|RI_r|+1$:
We pick a point $p\in C_{K_k}$ with $p\notin \cl(RI_r)$ 
and set $RI_r':=\left(RI_r\setminus I_k\right)\cup I_k^p$.

Before we show that such $RI_r'$ is a rainbow independent subsequence 
of size $|RI_r|+1$, we prove 
the following auxiliary equality:
\begin{equation}\label{eq:increasing-hull}
 \cl(RI_r') = \cl \left(RI_r\cup\{p\}\right).
\end{equation}

Indeed,
\begin{eqnarray*}
  \cl(RI_r') &=& \cl\left(\left(RI_r\setminus I_k\right)\cup I_k^p\right) \\
				&=& \cl\left(\left(RI_r\setminus I_k\right)\cup (I_k^p\setminus\{p\})\cup\{p\}\right),
\end{eqnarray*}
where the last equality uses the fact that $p\in I_k^p$ from condition~\ref{c:same_affine_hulls}.
Because any closure operator $\cl$ satisfies 
\begin{equation}\label{eq:properties_of_aff}
  \cl\bigl(B\cup C\bigr) = \cl\bigl(B\cup \cl C\bigr)\qquad\text{for any two sets $B,C\subseteq M$},   
\end{equation}
we may rewrite the expression further to
\begin{equation*}
 \cl(RI_r') = \cl\bigl(\left(RI_r\setminus I_k\right)\cup \cl\bigl(I_k^p\setminus\{p\}\bigr)\cup \{p\}\bigr).
\end{equation*}
By condition~\ref{c:same_affine_hulls} $\cl\bigl(I_k^p\setminus\{p\}\bigr)=\cl(I_k)$,
which reduces the equality to:
\begin{equation*}
 \cl(RI_r') =  \cl\Bigl(\bigl(RI_r\setminus I_k\bigr)\cup \cl(I_k)\cup \{p\}\Bigr).
\end{equation*}
Using \eqref{eq:properties_of_aff} again, we obtain
\begin{eqnarray*}
 \cl(RI_r') &=& \cl\Bigl((RI_r\setminus I_k)\cup I_k\cup\{p\}\Bigr)
\end{eqnarray*}
Since $I_k\subseteq RI_r$, Equation~\eqref{eq:increasing-hull} follows.

Using the fact that $I_k\subseteq RI_r$, 
we are now ready to verify that $RI_r'$ is a rainbow independent subsequence with $|RI_r'|=|RI_r|+1$.

\begin{itemize}
\item \textbf{$|RI_r'|=|RI_r|+1$:}
$|RI_r'| = |\left(RI_r\setminus I_k\right)\cup I_k^p|$.
Because $RI_r$ is rainbow, condition\footnote{$c(I_k^p)= c(I_k) \cup \{c_k^p\}$, for some $c_k^p\in K_k\setminus c(RI_r)\subseteq K_k\setminus c(I_k)$}~\ref{c:rkp-colors} implies
that the sequences $RI_r\setminus I_k$ and $I_k^p$ do not share any color.
In particular, they are disjoint and $|RI_r'|=|RI_r\setminus I_k| + |I_k^p|$.
Since $|I_k^p|=|I_k|+1$ (condition~\ref{c:rkp-augmenting-size}), $|RI_r'| = |RI_r\setminus I_k| + |I_k| + 1$.
Because $I_k\subseteq RI_r$, we have $|RI_r'|= |RI_r| + 1$.  
\item \textbf{$RI_r'$ is rainbow:}
$I_k^p$ contains one element of color that is not used in $RI_r$, otherwise it uses the same colors as $I_k$.
Because $RI_r'=\left(RI_r\setminus I_k\right)\cup I_k^p$, we see that $P'_r$ uses exactly $|RI_r|+1$ colors.
This, together with the previous item, yields that $P'_r$ is rainbow.
\item \textbf{$RI_r'$ is independent}:
From the equality~\eqref{eq:increasing-hull} we get $\cl(RI_r') = \cl\left(RI_r \cup\{p\}\right).$
Moreover, we have chosen a point $p$ which satisfies $p\notin \cl(RI_r)$, so $\rk(RI_r') = \rk RI_r + 1.$
Since $RI_r$ was independent and $RI_r'$ has exactly one element more,
the independence of $RI_r'$ follows.
\end{itemize}
Let $RI_r''$ be an inclusion maximal independent rainbow subsequence of $S$
that contains $RI_r'$.
We may now start our algorithm again but this time we replace the maximal independent rainbow subset
$RI_r$ by $RI_r''$.
We have decreased the quantity $(m-\rk C_r)$ and preserved $m$ and $r$.
By induction we obtain the desired disjoint rainbow subsequences $S_1,\ldots, S_r$ with $\cl\emptyset\subsetneq \cl S_1 \subseteq \cl S_2\subseteq \ldots \subseteq  \cl S_r$.

\subsubsection*{Case~\ref{c:continue-backward-induction}: $C_{K_k}\subseteq \cl(RI_r)$ and $C_{K_k}\not\subseteq \cl(I_k)$}
In this case, we show how to construct sets $K_{k+1}$, $I_{k+1}$ and 
for every $p\in C_{K_{k+1}}$ with $p\notin\cl(I_{k+1})$ we construct a subsequence $I_{k+1}^p$.

We choose $I_{k+1}$ to be any inclusion minimal subsequence $I_{k+1}\subseteq RI_r$ satisfying 
\begin{equation}\label{eq:choosing_rk}
C_{K_k}\subseteq \cl I_{k+1}. 
\end{equation}
Because we assume that $C_{K_k}\subseteq \cl(RI_r)$, such set $I_{k+1}$ does exist.
We further define 
\begin{equation}\label{eq:kk}
K_{k+1}:=K_k\cup c(I_{k+1}).  
\end{equation}

Before we construct $I_{k+1}^p$, we prove the following auxiliary claim:

\begin{claim}\label{cl:two}
\begin{equation}
 I_k\subsetneq I_{k+1}\qquad\text{and}\qquad \cl I_k\subsetneq \cl I_{k+1}.
\end{equation}
\end{claim}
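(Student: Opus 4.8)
\begin{proof}[Proof of Claim~\ref{cl:two}]
The plan is to first show $I_k\subseteq I_{k+1}$, then upgrade both inclusions to strict ones using the Case~\ref{c:continue-backward-induction} hypothesis $C_{K_k}\not\subseteq\cl(I_k)$.

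First I would note, using condition~\ref{c:no-further-colors}, that $I_k=RI_r\cap C_{K_k}$, so in particular $I_k\subseteq C_{K_k}$. Combining this with the defining property~\eqref{eq:choosing_rk} of $I_{k+1}$ gives $I_k\subseteq C_{K_k}\subseteq\cl(I_{k+1})$. Now I invoke the standard matroid fact that an element of an independent set which lies in the closure of a subset of that set must itself belong to the subset: take any $z\in I_k$; since $z\in RI_r$, the set $I_{k+1}\cup\{z\}$ is a subset of the independent sequence $RI_r$, so if $z\notin I_{k+1}$ then $I_{k+1}\cup\{z\}$ is independent and $z\notin\cl(I_{k+1})$, contradicting $z\in\cl(I_{k+1})$. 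Hence $z\in I_{k+1}$, and since $z$ was arbitrary, $I_k\subseteq I_{k+1}$.

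For the strict inclusions: if we had $I_k=I_{k+1}$, then $\cl(I_k)=\cl(I_{k+1})\supseteq C_{K_k}$ by~\eqref{eq:choosing_rk}, contradicting the Case~\ref{c:continue-backward-induction} assumption $C_{K_k}\not\subseteq\cl(I_k)$; therefore $I_k\subsetneq I_{k+1}$. By monotonicity of $\cl$ this gives $\cl(I_k)\subseteq\cl(I_{k+1})$, and the inclusion is again strict because $C_{K_k}\subseteq\cl(I_{k+1})$ whereas $C_{K_k}\not\subseteq\cl(I_k)$.
\end{proof}

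I expect no serious obstacle here; the only subtlety is identifying the right combination of the bookkeeping conditions (chiefly~\ref{c:no-further-colors}, which pins down $I_k$ as exactly $RI_r\cap C_{K_k}$) together with the fact that $RI_r$ is independent, which is what forces elements of $I_k$ into $I_{k+1}$ rather than merely into its closure.
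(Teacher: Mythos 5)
Your proof is correct and follows essentially the same route as the paper's: both derive $I_k\subseteq C_{K_k}\subseteq\cl(I_{k+1})$, use the independence of $RI_r$ to upgrade closure containment to set containment, and obtain strictness from the Case~\ref{c:continue-backward-induction} hypothesis $C_{K_k}\not\subseteq\cl(I_k)$. The only cosmetic difference is that you cite condition~\ref{c:no-further-colors} for $I_k\subseteq C_{K_k}$ where the paper cites condition~\ref{c:more-colors}, and you spell out the matroid fact that the paper leaves implicit.
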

\begin{proof}
By condition~\ref{c:more-colors}, $I_k\subsetneq C_{K_k}$.
By Eq.~\eqref{eq:choosing_rk}, we have $\cl I_k \subseteq \cl I_{k+1}$.
By construction both $I_k$ and $I_{k+1}$ are subsequences of the independent sequence 
$RI_r$ which together with the preceding yields $I_k\subseteq I_{k+1}$. 
Condition~\ref{c:more-colors} and the fact that we are in case~\ref{c:continue-backward-induction} yields
$I_k\subseteq C_{K_k}\not\subseteq \cl I_k$.
Since also $C_{K_k}\subseteq \cl I_{k+1}$, we see that $\cl I_{k+1}\neq \cl I_k$ and $I_{k+1}\neq I_k$.
\end{proof}

Now we construct sets $I_{k+1}^p$ for all points $p\in C_{K_{k+1}}$ satisfying $p\notin\cl I_{k+1}$.
Let $p$ be such a point.
By definition of $I_{k+1}$, $C_{K_k}\subseteq \cl I_{k+1}$, so $p$ cannot lie in $C_{K_k}$.
Equation \eqref{eq:kk} implies $c(p)\in \left(K_{k+1}\setminus K_k\right) \subseteq c(I_{k+1})$. 
Because $I_{k+1}\subseteq RI_r$ is a rainbow set\footnote{$RI_r$ is rainbow!}, there exists a unique element $r\in I_{k+1}$ with $c(r)=c(p)$.
Since we assume $p\notin C_{K_k}$, we have $c(r)=c(p)\notin K_k\supseteq c(I_k)$, where the last inclusion follows from condition~\ref{c:more-colors}.
In particular, $c(r)\notin c(I_k)$, hence
\begin{equation}\label{eq:outside}
r\in I_{k+1}\setminus I_k. 
\end{equation}
Since $I_{k+1}$ is an inclusion minimal subsequence of $RI_r$ for which $C_{K_k}\subseteq \cl I_{k+1}$,
there exists an element $q\in C_{K_k}$ such that 
$q\notin\cl\bigl(I_{k+1}\setminus \{r\}\bigr)$. 
Since $q\in C_{K_k}\subseteq \cl I_{k+1}$,
the exchange principle implies $r\in\cl\bigl((I_{k+1}\setminus \{r\})\cup \{q\}\bigr)$.

It easily follows that
\begin{equation}\label{eq:choosing-pqr}
 \cl I_{k+1} = \cl \bigl((I_{k+1}\setminus \{r\})\cup\{q\}\bigr).  
\end{equation}

Claim~\ref{cl:two} together with \eqref{eq:outside} imply that $I_k\subseteq I_{k+1}\setminus \{r\}$. Since $q$
was chosen to satisfy $q\notin \cl\bigl(I_{k+1}\setminus\{r\}\bigr)$, we have $q\notin\cl I_{k}$ as well.
Together with $q\in C_{K_k}$, this implies that $I_k^q$ is defined.
We set\footnote{We note that $I_{k+1}^p$ does depend on the choice of $q$, i.e.,
if we choose another $q\in C_{K_k}$ that satisfies $q\notin\cl\bigl(I_{k+1}\setminus\{r\}\bigr)$,
we obtain a different set $I_{k+1}^p$.}
\begin{equation}\label{eq:defining-rkp}
 I_{k+1}^p:=I_{k+1}\setminus\bigl(I_k\cup \{r\}\bigr)\cup I_k^q\cup\{p\}.
\end{equation}

It remains to show that our assignment satisfies conditions~\ref{c:more-colors}--\ref{c:no-further-colors}.

\begin{itemize}
\item \textbf{Condition~\ref{c:more-colors}:}
By \eqref{eq:kk}, we have $c(I_{k+1})\subseteq K_{k+1}$.
Condition~\ref{c:no-further-colors} implies that $K_k$ contains a color that is not used in $RI_r$
and since $I_{k+1}\subseteq RI_r$, which together with \eqref{eq:kk} yields
$K_{k+1}\neq c(I_{k+1})$. Condition~\ref{c:more-colors} follows.
\item \textbf{Condition~\ref{c:rkp-colors}:}
Condition~\ref{c:rkp-colors} states that $c(I_k^q)=c(I_k) \cup \{c_k^q\}$ for some $c_k^q\in K_k\setminus RI_r$, in particular
$c(I_k)\subseteq c(I_k^q)$. 
Together with the fact that elements $p$ and $r$ have the same color ($c(p)=c(r)$), \eqref{eq:defining-rkp} yields 
$c(I_{k+1}^p) = c\left(I_{k+1}\setminus I_{k}\right)\cup c(I_{k}^q)$. If we now apply condition~\ref{c:rkp-colors} for $I_k^q$
and Claim~\ref{cl:two}, we see that $c(I_{k+1}^p) = c(I_{k+1}) \cup \{c_{k+1}^p\}$, where $c_{k+1}^p = c_k^q$.
Note that $K_k\subseteq K_{k+1}$, hence $c_{k+1}^p\in K_{k+1}\setminus c(RI_r)$. Condition~\ref{c:rkp-colors} follows.

\item \textbf{Condition~\ref{c:rkp-augmenting-size}:}
By definition $I_{k+1}^p = I_{k+1}\setminus \bigl(I_{k}\cup \{r\}\bigr)\cup I_{k}^q \cup \{p\}$.
Because $I_{k+1}$ is a subset of the rainbow set $RI_r$, $I_{k+1}$ is itself rainbow.
Together with $c(I_{k}^q) = c(I_{k}) \cup \{c_{k}^q\}$, where $c_{k}^q\notin c(RI_r)\supseteq c(I_{k+1})$,
this implies that the sets $I_{k+1}\setminus I_{k}$ and $I_{k}^q$ are disjoint. 
Since $r\in I_{k+1}\setminus I_k$ (Equation~\eqref{eq:outside}), $c(p)=c(r)\in c(I_{k+1})\setminus c(I_k)$ and $c(I_k^q)\cap c(RI_r)=c(I_k)$ (conditions~\ref{c:rkp-colors} and \ref{c:no-further-colors}),
we have $p,r\notin I_{k}^q$ and $p,r\notin I_{k}$.
From $p\notin\cl I_{k+1} $ follows $p\notin I_{k+1}$. Since $r\in I_{k+1}$,
we have $|I_{k+1}^p| = |I_{k+1}\setminus I_{k}| - |\{r\}| + |\{p\}| + |I_{k}^q| = |I_{k+1}\setminus I_{k}| + |I_{k}| + 1$,
where the last equality uses the induction hypothesis for $k$.
Claim~\ref{cl:two} then yields $|I_{k+1}^p|=|I_{k+1}| + 1$ as desired.

\item \textbf{Condition~\ref{c:same_affine_hulls}:}
By definition (\eqref{eq:choosing-pqr}) $p\in I_{k+1}^p$, so we only need to verify that $\cl \bigl(I_{k+1}^p\setminus \{p\})\bigr) = \cl I_{k+1}$.
Let us compute.
Using the fact that $q\in I_{k}^q$ from condition~\ref{c:same_affine_hulls} and \eqref{eq:properties_of_aff}
we may rewrite $\cl\bigl(I_{k+1}^p\setminus\{p\}\bigr)$ as follows:
\begin{eqnarray*}\cl\bigl(I_{k+1}^p\setminus\{p\}\bigr) &=& \cl\Bigl(\bigl(I_{k+1}\setminus (I_{k}\cup \{r\})\bigr)\cup I_{k}^q\Bigr)\\
 &=& \cl\Bigl(\bigl(I_{k+1}\setminus (I_{k}\cup \{r\})\bigr)\cup (I_{k}^q\setminus\{q\}\cup\{q\})\Bigr)\\
 &=& \cl\Bigl(\bigl(I_{k+1}\setminus (I_{k}\cup \{r\})\bigr)\cup \cl\bigl(I_{k}^q\setminus\{q\}\bigr)\cup \{q\}\Bigr).\\
\end{eqnarray*}
Now we use condition~\ref{c:same_affine_hulls} for $k$ ($\cl\bigl(I_{k}^q\setminus\{q\}\bigr)= \cl I_{k}$). We obtain
\begin{eqnarray*}
\cl\bigl(I_{k+1}^p\setminus\{p\}\bigr) 
 &=& \cl\Bigl(\bigl(I_{k+1}\setminus (I_{k}\cup \{r\}\bigr)\cup \cl\bigl(I_{k}\bigr)\cup \{q\}\Bigr)\\
 &=& \cl\Bigl(\bigl(I_{k+1}\setminus \{r\}\bigr)\cup\{q\}\Bigr)\\
 &=& \cl I_{k+1},
\end{eqnarray*}
where the last equality follows from \eqref{eq:choosing-pqr}.

\item \textbf{Condition~\ref{c:no-further-colors}:}
By definition $K_{k+1}=K_k\cup c(I_{k+1})$. This implies $C_{K_{k+1}} = C_{K_k}\cup C_{c(I_{k+1})}$.
Hence $RI_r \cap C_{K_{k+1}} = (RI_r\cap C_{K_k})\cup (RI_r\cap C_{c(I_{k+1})})$.
By the induction assumption $RI_r\cap C_{K_k}=I_k$. Because $RI_r\supseteq I_{k+1}$ is rainbow,
$RI_r\cap C_{c(I_{k+1})}=I_{k+1}$. Claim~\ref{cl:two} then implies $RI_r\cap C_{K_{k+1}}=I_{k+1}$ as desired.
Because $K_k\not\subseteq c(RI_r)$ and $K_k\subseteq K_{k+1}$, we have $K_{k+1}\not\subseteq c(RI_r)$ as well.
\end{itemize}
It follows that we may increase $k$ and continue in the loop.

In each step of the cycle we either terminate and output the desired subsequences,
or we construct a sequence $I_{k+1}$ whose rank is strictly larger than the rank of $I_k$ (Claim~\ref{cl:two}).
Since the rank of $I_{k+1}$ is from above bounded by $\rk(M)$ it follows that
the loop terminates after at most $\rk(M)$ iterations.

Verifying that all other steps can be done in time polynomial in $r$, $m$, $u$ and $|S|$ and 
that they are repeated only polynomial number of times is easy.
\end{proof}

\section{Open problems}
Rota basis conjecture~\cite{rota-1994} is a well known problem in matroid theory which has a close connection to our
colorful matroidal Tverberg's theorem. Let us restate it so that the similarity is clearly visible.
\begin{conjecture}
Let $M$ be a matroid of rank $m$. Let $S$ be a sequence of $m^2$ elements colored
by $m$ colors such that points of each color form a basis.
Do there always exist $m$ pairwise disjoint rainbow subsequences $S_1,\ldots, S_m$ of $S$
with $\cl S_1=\cl S_2=\ldots = \cl S_r=M$?
\end{conjecture}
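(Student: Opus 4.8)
This final statement is \emph{Rota's basis conjecture}, a well-known open problem in matroid theory; accordingly, a complete proof is not available, and I can only outline the standard lines of attack and indicate where each one stalls. The first observation is how the conjecture sits relative to Theorem~\ref{thm:colored-tverberg-special}. Each of the $m$ colour classes here has exactly $m$ elements, so the first colour occurs at least $r:=m$ times and every other colour at least $r-1$ times; the hypotheses of Theorem~\ref{thm:colored-tverberg-special} are met, and one already obtains $m$ pairwise disjoint rainbow subsequences $S_1,\ldots,S_m$ with $\cl\emptyset\subsetneq\cl S_1\subseteq\cdots\subseteq\cl S_m$. Rota's conjecture asks for the much stronger conclusion $\cl S_i=M$ for every $i$, i.e.\ that the $m^2$ elements partition into $m$ transversal bases. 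Since $\cl\emptyset\subsetneq\cl S_1$ only records that $S_1$ is nonempty, essentially the whole content lies in promoting a nested tower of flats to a list of $m$ copies of $M$, and I see no way to bootstrap the argument behind Theorem~\ref{thm:colored-tverberg-special} to achieve this.

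The natural combinatorial plan is a greedy exchange argument. Arrange the $m$ colour classes as the columns of an $m\times m$ array, each column a basis, and attempt to permute entries within columns until every row is a basis as well. One maintains the number $t$ of rows that are currently bases; if $t<m$, choose a non-basis row $R$, find in some other row an entry lying outside $\cl(R)$, swap it into $R$ (restoring a basis there), and chase the resulting local damage through the array, hoping to close up with $t$ strictly larger. \textbf{The main obstacle is the absence of a monovariant:} such swap sequences can cycle, and no potential function --- rank sums, weighted counts of independent transversals, entropy-type quantities --- is known to be provably monotone along them. This is precisely why the conjecture is still open in general. Unconditionally one knows only partial results: the conjecture holds for matroids of rank at most $3$ and for paving matroids, and probabilistic/absorption methods yield a positive proportion of the $m$ transversal bases, but no argument is known that closes the gap for an arbitrary matroid.

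In the representable case one may instead try the polynomial method. If $M$ is a vector matroid over a field and the colour classes are written as columns of block matrices, a rearrangement into $m$ transversal bases exists whenever a certain Latin-square determinant polynomial does not vanish identically; for even $m$ this non-vanishing follows from the Alon--Tarsi conjecture on the sign balance of Latin squares, which is verified for $m$ in certain families related to primes and for all small $m$. Via the Combinatorial Nullstellensatz this settles Rota's conjecture for those $m$, but it says nothing for general $m$ and nothing outside the representable world. I would regard the crux as either finding the missing monovariant for the exchange process or fusing the matroidal exchange geometry used in this paper with such an algebraic identity --- and, to be clear, as a genuinely open problem rather than a routine exercise.
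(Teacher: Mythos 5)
You are right not to attempt a proof: the paper states this as Rota's basis conjecture, an open problem (verified in full generality only for $m\le 3$, plus special classes such as paving matroids and the Alon--Tarsi-related cases), and offers no proof of its own. Your assessment of the relationship to Theorem~\ref{thm:colored-tverberg-special} --- that the hypotheses are satisfied with $r=m$ but the nested-flats conclusion is far weaker than requiring every $\cl S_i=M$ --- and your survey of why the exchange and polynomial approaches stall are accurate and consistent with the paper's own remarks.
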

In its full generality the conjecture has only been verified for $m=1,2,3$~\cite{chan-1995}.
The conjecture is also known to be true in several special cases~\cite{Geelen-2006,Onn-1997,Glynn-2010}.
Proof of Theorem~\ref{thm:colored-tverberg-special} indicates the difficulties that appear if one
tries to prove Rota's basis conjecture purely combinatorially.

\newcommand{\etalchar}[1]{$^{#1}$}

\end{document}